\documentclass[11pt,reqno]{amsart}
\usepackage[top=1in, bottom=1in, left=1in, right=1in]{geometry}
\usepackage{times, amsthm, amssymb, amsmath, amsfonts, amsthm, bm, graphicx, mathrsfs,mathtools}
\usepackage[all]{xy}
\usepackage[usenames,dvipsnames]{color}
\usepackage[colorlinks=true,linkcolor=blue,urlcolor=blue, citecolor=blue]%
  {hyperref}
\usepackage[alphabetic]{amsrefs}
\usepackage{array,enumitem,multicol}

\usepackage{pgf, tikz}
\usetikzlibrary{arrows, automata}
\usetikzlibrary{patterns,calc,arrows,shapes,positioning}

\usepackage{url}

\DefineSimpleKey{bib}{myurl}

\newcommand\myurl[1]{\url{#1}}

\BibSpec{webpage}{%
  +{}{\PrintAuthors} {author}
  +{,}{ \textit} {title}
  +{}{ \parenthesize} {date}
  +{,}{ \myurl} {myurl}
  +{,}{ } {note}
  +{.}{ } {transition}
}

\newcommand{\excise}[1]{}

\newcommand{\rvline}{\hspace*{-\arraycolsep}\vline\hspace*{-\arraycolsep}}

\newtheorem{theorem}{Theorem}[section]

\newtheorem{cor}[theorem]{Corollary}

\newtheorem{prop}[theorem]{Proposition}

\newtheorem*{definition*}{Definition}

\newtheorem{question}[theorem]{Question}

\newtheorem{theorem/definition}[theorem]{Theorem/Definition}

\theoremstyle{definition}
\newtheorem{example}[theorem]{Example}
\newtheorem{remark}[theorem]{Remark}

\def\<{\langle}
\def\>{\rangle}



 





\numberwithin{equation}{section}
\parindent0pt
\parskip=1ex

\begin{document}

\vspace*{-12mm}
\mbox{}
\title[Knockout: a Markov chain anaylsis]{First is the worst, second is the best? \\ a Markov chain analysis of the basketball game knockout}

\author{Andrew Flatz}
\address{Mathematics Department \\
University of Wisconsin - River Falls.}
\email{andrew.flatz@my.uwrf.edu}

\author{Michael C. Loper}
\address{Mathematics Department \\
University of Wisconsin - River Falls.}
\email{michael.loper@uwrf.edu}

\author{Lezlie Weyer}
\address{Mathematics Department \\
University of Wisconsin - River Falls.}
\email{lezlie.weyer@my.uwrf.edu}

\keywords{sports analytics, Markov chains, basketball, predictive modeling}
\begin{abstract} 
The game of Knockout is a classic playground game played with two basketballs. This paper uses a Markov process to analyze each player's probability of winning the game given their starting position in line and shooting percentages, assuming all players are equally skilled. The two-player case is solved in general for any probability of a long shot and short shot shooting percentage and the $n$-player case with $n > 2$ is solved numerically. In doing so, this paper answers the question of whether or not the playground wisdom of ``first is the worst, second is best'' is true. We also examine the average number of rounds it takes before the game ends, analyze trends in the data to recommend tips to win at Knockout, and provide questions in the case of players not being equally skilled.
\end{abstract}
\maketitle

\mbox{}
\vspace*{-16mm}

\setcounter{section}{1}
\section*{Introduction}
\label{sec:intro}
The phrase ``First is the worst, second is the best'' is often heard as a common playground refrain. But is this phrase more than just a child's saying? Could it be true there is actual wisdom behind it? Knockout, sometimes called Lightning, Elimination, Gotcha, or Bumpout, is a standard schoolyard game played with two basketballs in which players race to make baskets as quickly as they can in the hopes of eliminating the player in front of them \cite{Rosenthal}. At the start of a game of Knockout, it is not unusual for a scuffle to result as many players desire the second position in line. At the same time arguments happen about who must start in the most undesirable position, first, because the first player in line is often also the first person eliminated.

The aim of this article is to discover whether or not a player's starting position actually makes a difference in the game of Knockout. It turns out, that under reasonable assumptions, the starting position does affect the probability of each player winning the game. Further, the playground wisdom above is true at least half of the time:  first truly is the worst, and while not exactly true that second is the best, being in the second position does result in a considerable increase in winning probability.

We model Knockout stochastically with a Markov process: that is, we describe possible states of the game corresponding to which players are shooting and which players are in danger of being eliminated and analyze the probabilities of moving between these states. Markov chains are a common technique used in probability and linear algebra to model anything from a drunkard's walk to fluctuations in the stock market and weather forecasting. Focusing on applications in sports in particular, Markov processes have been useful in predictive models of baseball, hockey, and football \cite{bukiet, MR2360905, MR2011961, MR3409642, Kolbush, Wang}. In the sport of basketball, Paul Kvam and Joel Sokol combine logistic regression with Markov chain methods to predict the outcome of NCAA tournament games \cite{Kvam}. Jian Shi and Kai Song use Markov methods in order to predict outcomes of NBA basketball games \cite{MR4336365}. However, the authors could find no literature exploring Knockout.

\subsection*{Outline}
Section~\ref{sec:prelim} contains a brief background on Markov chains, the rules of the game of Knockout, and a description of the model used throughout this paper. The case of the two-player game of Knockout is analyzed in Section~\ref{sec:twoplayers} with the general case of the $n$-player game (for $n>2$) studied in Section~\ref{sec:nplayers}. The expected length of a game is examined in Section~\ref{sec:expectedlength}. In Section~\ref{sec:discussion}, we switch to a numerical approach to explore general trends and offer tips to improve one's chances at winning a game of Knockout. Finally, Section~\ref{sec:unequal} briefly discusses what changes if one removes the assumption that players are equally skilled and asks questions for future work.

\subsection*{Acknowledgements}
\label{subsec:acknowledgements}

The authors thank Daniel Swenson for a helpful conversation in the beginning stages of this project and Ben Strasser for providing feedback that improved the readability of this paper.

\section{Background}
\label{sec:prelim}
In this section, we first review some basic theory about Markov chains, then describe the rules of the game of Knockout and our stochastic model of the game.

\subsection{Background on absorbing Markov chains}
A more in-depth explanation of absorbing Markov chains can be found in \cite{grinstead}*{Ch 11}. We begin with a few definitions:
A \textit{Markov process} or \textit{Markov chain} is a state-based stochastic process that specifies a sequence of possible events where each future event is independent of past events and depends only on the current state. An \textit{absorbing state} of a Markov chain is a state, that once entered, cannot be left. A \textit{transient state} is a state that is not absorbing. An \text{absorbing Markov chain} is a Markov chain where every state can reach an absorbing state. Letting $T$ be the transition matrix of an absorbing Markov chain with $t$ transient states and $r$ absorbing states, $T$ is often written in \textit{canonical form} where the rows of $T$ denote sources and the columns of $T$ are represented by destinations. Therefore the probability of moving from state $i$ to state $j$ is $T_{ij}$. In canonical form, the states are ordered so that the absorbing states are last. In this way $T$ can be written as a block matrix
$$T = \begin{bmatrix} Q & R \\ \mathbf{0} & I_r \end{bmatrix}$$
where $Q$ is a $t$-by-$t$ matrix that respresents the probabilities of going from each transient state to every other transient state, $R$ is a $t$-by-$r$ matrix that represents the probability of moving from each transient state to each absorbing state, $\mathbf{0}$ is the $r$-by-$t$ zero matrix, and $I_r$ is the $r$-by-$r$ identity matrix. The matrix $N:=(I-Q)^{-1}$ is often called the \textit{fundamental matrix} of an absorbing Markov chain.

The following theorem about absorbing Markov chains is well-known, but a proof is included here to keep this article as self-contained as possible. This theorem will do the heavy lifting in discovering the optimal starting position.

\begin{theorem}\label{thm:markovsoln}
	In an absorbing Markov chain, the probability of eventually being absorbed in the absorbing state $j$ when starting in transient state $i$ is given by the $(i,j)$-entry of the matrix $NR.$
\end{theorem}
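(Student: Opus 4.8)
The plan is to track the probability of being absorbed in state $j$ after exactly $k$ steps, then sum over all $k$ using the block structure of $T$ in canonical form. The key observation is that powers of the transition matrix encode multi-step transition probabilities: the $(i,j)$-entry of $T^k$ is the probability of moving from state $i$ to state $j$ in exactly $k$ steps. Because $T$ is block upper-triangular, I would first verify by induction that
\begin{equation*}
T^k = \begin{bmatrix} Q^k & \left(\sum_{m=0}^{k-1} Q^m\right) R \\ \mathbf{0} & I_r \end{bmatrix}.
\end{equation*}
The top-left block $Q^k$ governs staying among transient states, and the top-right block records the accumulated probability of having been absorbed by step $k$. The inductive step reduces to the identity $Q^{k} + \left(\sum_{m=0}^{k-1} Q^m\right)R \cdot \mathbf{0}$ plus the recursive relation $\sum_{m=0}^{k} Q^m = I + Q\sum_{m=0}^{k-1} Q^m$, which is routine matrix bookkeeping.

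Next I would pass to the limit as $k \to \infty$. The probability of eventually being absorbed in state $j$ starting from transient state $i$ is exactly the limit of the $(i,j)$-entry of the upper-right block, namely the $(i,j)$-entry of $\left(\sum_{m=0}^{\infty} Q^m\right) R$. The main analytic step is to justify that this geometric series of matrices converges and that its sum equals the fundamental matrix $N = (I-Q)^{-1}$. For this I would invoke the fact that in an absorbing Markov chain every transient state eventually reaches an absorbing state, which forces $Q^k \to \mathbf{0}$ as $k \to \infty$ (the probability of remaining transient forever is zero). Given $Q^k \to \mathbf{0}$, the telescoping identity $(I - Q)\sum_{m=0}^{k-1} Q^m = I - Q^k$ shows that the partial sums converge to $(I-Q)^{-1}$, which simultaneously establishes that $I - Q$ is invertible and that $\sum_{m=0}^\infty Q^m = N$.

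The hard part will be the justification that $Q^k \to \mathbf{0}$, since everything else is formal manipulation once that limit is in hand. I would establish it by arguing that the row sums of $Q^k$ give the probability of still being in a transient state after $k$ steps; because each transient state has a positive-probability path to absorption of bounded length (say at most $t$ steps), the probability of surviving $t$ consecutive steps in the transient part is bounded above by some $c < 1$, and hence the survival probability after $mt$ steps is at most $c^m \to 0$. This yields $Q^k \to \mathbf{0}$ and also guarantees the convergence of the series. Combining the two displays, the eventual absorption probability from $i$ to $j$ is the $(i,j)$-entry of $NR$, completing the proof.
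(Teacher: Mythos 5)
Your proposal is correct and follows essentially the same route as the paper: decompose the event of absorption in state $j$ according to the time step at which it occurs, so that the answer is the $(i,j)$-entry of $\left(\sum_{m\ge 0} Q^m\right)R = (I-Q)^{-1}R = NR$. In fact your write-up is more complete than the paper's, since you justify the convergence of the matrix series and the invertibility of $I-Q$ by proving $Q^k \to \mathbf{0}$ (via the bounded-length path-to-absorption argument) and the telescoping identity, steps the paper passes over by simply citing the sum of a geometric series.
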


\begin{proof}
The entry $R_{i,j}$ is the probability of being absorbed into absorbing state $j$ from transient state $i$ after one time step and similarly, the entry $(Q^nR)_{i,j}$ is probability of being absorbed into absorbing state $j$ from transient state $i$ after $n-1$ time steps. The probability of eventually being absorbed in the absorbing state $j$ when starting in transient state $i$ is the sum over all time steps. Hence, the probability of of eventually being absorbed in the absorbing state $j$ when starting in transient state $i$ is the $(i,j)$-entry of
$$(I + Q + Q^2 + \cdots)R = (I-Q)^{-1}R = NR,$$
where the first equality follows from the sum of a geometric series.
\end{proof}

\subsection{Background on Knockout}
\label{subsec:rules}
A game of Knockout begins with a line of $n$ players standing at the free throw line\footnote{Some variants of Knockout have players standing elsewhere like the three-point line or even half court, but for simplicity we'll assume here that players start at the free throw line. Starting elsewhere on the court will not change the analysis.}. The first two players have basketballs. The first player takes a shot from the free throw line, followed by the second player shooting from the free throw line. If a player misses their shot from the free throw line, they rebound their miss and may take their next shot from anywhere on the court. If the second player makes a basket before the first player in line does, the first player in line is eliminated. Both players rebound their ball and pass it to the next two players in line. The first player is eliminated and the second player takes their place at the end of the line. If the first player makes a basket before the second player, the first player rebounds their make and passes their ball to the third player in line, while the second player continues to shoot. The first player remains in the game and cycles to the end of the line. The third player may eliminate the second player by making a basket before them. The game continues in this manner with each player attempting to eliminate the player in front of them by making a basket before them, while at the same time trying to avoid getting eliminated by the player behind them. Eventually every player except for one is eliminated. This last player remaining is crowned the champion.

\subsection{Description of Model}
In order to better model the game we make two assumptions:
\begin{enumerate}
	\item players take turns shooting
	\item all players are equally skilled
\end{enumerate}

It may be helpful to think of having two different color basketballs: one red and one blue. The red ball will be shot first with the blue ball being shot immediately after. After the blue ball is shot, the red ball will be shot again, and so on. As a result of Assumption (1), it is in every players' best interest to take a shot with the highest probability of going in if they miss their long shot (LS) from the free throw line. This shot is most likely a lay-up and we call any shot that is not a long shot a ``short shot'' (SS). It is usual that the probability of making a short shot is greater than a long shot.

In order to model the game of Knockout, we first break up a game played with $n$ players into a series of $n-1$ rounds. Each round culminates with a player being eliminated. In a game with $n$ players, Round 1 starts with $n$ players and ends with $n-1$ players. In general, Round $k$ (for $k \le n-1$) begins with $n-k+1$ players and ends with $n-k$ players. After $n-1$ rounds, there have been $n-1$ players eliminated and the lone player remaining is the champion.

Each round can be partitioned into discrete game-states. Each game-state is determined by which two shots will be shot next (e.g. red ball short shot then blue ball long shot) and in the case of a two-player round, which player is in danger of being eliminated. In order to transition between game-states, two shots are usually taken (the only exceptions being cases where the player making a shot with the red ball eliminates someone). Because the probabilities of making a short shot and long shot are assumed to be given, the probability of moving between each pair of game-states can be calculated. Thus each round forms a Markov chain. Even better, because eliminated players can never rejoin the game, each elimination game-state is a steady state where there is a probability of 1 of remaining in that state and a probability of 0 of going to any other state. Once such an absorbing state is reached, the game starts another round with one fewer player. In this way, our model of an $n$-player game of Knockout consists of $n-1$ rounds of absorbing Markov chains.

The second assumption that all players are equally skilled enable reduces the game to three parameters: $p$, the probability that each player makes a long shot from the free-throw line, $q$, the probability that each player makes a short shot, and $n$, the number of players in line at the start of the game. 

The next theorem calculates the expected number of rounds needed to end a game of Knockout. This theorem is well-known for general absorbing Markov chains, but we adapt it here in the context of our model of Knockout.

\begin{theorem}\label{thm:expected}
	The expected number of steps before someone is eliminated in a round of Knockout is given by the first entry in the column matrix
$$\vec{t} = N\vec{1}$$
where $\vec{1}$ is the column vector consisting of all $1$s.
\end{theorem}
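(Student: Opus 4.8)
The plan is to adapt the standard absorbing-chain argument, mirroring the proof of Theorem~\ref{thm:markovsoln}. The key is to reinterpret the fundamental matrix $N = (I-Q)^{-1}$ entrywise: I would show that $N_{ij}$ equals the expected number of times the chain visits transient game-state $j$ before absorption, given that it starts in transient game-state $i$. Granting this, summing over $j$ shows that $(N\vec{1})_i = \sum_j N_{ij}$ is the expected total number of transient game-states visited before a player is eliminated, which is exactly the expected number of steps before absorption from state $i$. Since each round of Knockout begins in the first transient game-state, reading off the first entry of $N\vec{1}$ then yields the claim.

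To establish the entrywise interpretation, I would first recall from the proof of Theorem~\ref{thm:markovsoln} that $(Q^n)_{ij}$ is the probability that the chain occupies transient state $j$ after $n$ steps, having started from transient state $i$; indeed, since transient states transition only to transient or absorbing states, occupying a transient state at time $n$ forces the chain to have avoided absorption through time $n$. Writing $X_n$ for the state at time $n$, this says $\mathbb{E}[\mathbf{1}\{X_n = j\}] = (Q^n)_{ij}$. Summing over all $n \geq 0$ and invoking the geometric-series identity $N = I + Q + Q^2 + \cdots$ already justified in Theorem~\ref{thm:markovsoln}, the expected number of visits to $j$ is $\sum_{n \geq 0}(Q^n)_{ij} = N_{ij}$, where the $n=0$ term $(Q^0)_{ij} = I_{ij}$ correctly counts the starting state.

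Finally, I would tie the visit count to the elimination time. If $T$ denotes the first time the chain enters an absorbing (elimination) state, then the chain sits in a transient state precisely at times $0, 1, \dots, T-1$, so the total number of transient visits equals $T$. Taking expectations gives $t_i = \mathbb{E}[T] = \sum_j N_{ij} = (N\vec{1})_i$, and the first entry records the expected number of steps of a round started from its initial game-state, as desired.

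The step requiring the most care is the bookkeeping in the last paragraph: one must fix the convention for what counts as a ``step'' (whether the initial occupancy is included) and confirm that it matches the indexing in the geometric series, so that the $n=0$ term is accounted for exactly once. A secondary point, though it comes for free in an absorbing chain, is that $\mathbb{E}[T]$ is finite---equivalently $Q^n \to 0$---which is also what makes the geometric series converge; this was already used in Theorem~\ref{thm:markovsoln}. As a cleaner alternative that sidesteps the visit-counting entirely, I could instead use first-step analysis: conditioning on the first transition gives $t_i = 1 + \sum_j Q_{ij}\, t_j$, i.e. $\vec{t} = \vec{1} + Q\vec{t}$, whence $(I-Q)\vec{t} = \vec{1}$ and $\vec{t} = N\vec{1}$.
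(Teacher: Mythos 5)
Your proposal is correct. It is worth comparing with the paper's own argument: both proofs ultimately reduce to computing $\left(\sum_{n\geq 0} Q^n\right)\vec{1} = N\vec{1}$ and reading off the first entry, but they justify the key identity $\mathbb{E}[T] = \sum_{n\geq 0} p_n$ (where $p_n$ is the probability of surviving $n$ steps) in different ways. The paper decomposes by the \emph{time} of elimination: it writes the expectation as $\sum_k k\,P(T=k) = \sum_k k\,(p_k - p_{k+1})$ and telescopes, which is the tail-sum formula for expectation derived by hand, and is exactly where the delicate off-by-one bookkeeping sits. You decompose $T$ \emph{spatially}, as the total number of visits to transient states, i.e. a sum of indicators $\mathbf{1}\{X_n = j\}$ over states $j$ and times $n$, and then swap the order of summation; this route buys you the standard entrywise interpretation of the fundamental matrix ($N_{ij}$ equals the expected number of visits to $j$ from $i$), which is a stronger statement than the theorem needs, and it makes the step-counting convention transparent since the $n=0$ term visibly accounts for the starting state. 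Your closing alternative via first-step analysis, $\vec{t} = \vec{1} + Q\vec{t}$ so that $(I-Q)\vec{t} = \vec{1}$ and $\vec{t} = N\vec{1}$, is the most genuinely different route: it avoids infinite series entirely and is the shortest path to the formula, at the cost of needing to know in advance that the expected absorption times are finite and that $I-Q$ is invertible --- facts that the series arguments (yours and the paper's) obtain along the way from the convergence of $\sum_{n\geq 0} Q^n$.
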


\begin{proof}
	The $(i,j)$th entry of $Q$ is given by the probability that transient game-state $j$ is reached after one step after starting in transient game-state $i$. Similarly $Q^k$ is the matrix whose $(i,j)$th entry gives the probability that game-state $j$ is reached after $k$ steps after starting in game-state $i$. Since our game always starts in the 1st game-state, we will only care about the first row of $Q^k$. The sum of the first row of $Q^k$ is equal to the first entry of $Q^k \vec{1}$ and gives the probability that the game is in any transient game-state after $k$ steps. Because the absorbing game-states are exactly the states where a player is eliminated, this probability is the same as the probability that no one is eliminated after $k$ steps. Let $p_k$ be the probability that no one is eliminated after $k$ steps.

Notice that the probability that a player is eliminated after exactly $k$ steps is equal to $p_{k} - p_{k+1}$. Computing the expected number of rounds gives
\begin{align*}
	1*P(&\text{someone is eliminated after one step})  \\ &+ 2*P(\text{someone is eliminated after two steps})& \\ &+ 3*P(\text{someone is eliminated after three steps}) + \cdots \\
&= \sum_{k=0}^\infty k*P(\text{someone is eliminated after } k \text{ steps}) \\
	&= \sum_{k=0}^\infty k*(p_{k} - p_{k+1}) \\
	&= 1(p_1 - p_2) + 2(p_2 - p_3) + 3(p_3 - p_4) + 4(p_4-p_5) + \cdots \\
	&= p_1 - p_2 + 2p_2 - 2p_3 + 3p_3 - 3p_4 + 4p_4 - 4p_5 + \cdots \\
	&= p_1 +  (- p_2 + 2p_2) + (- 2p_3 + 3p_3) + (- 3p_4 + 4p_4) - 4p_5 + \cdots \\
	&= p_1 + p_2 + p_3 + p_4  + \cdots \\
	&= \sum_{k=0}^\infty p_k \\
	&= \sum_{k=0}^\infty Q^k \vec{1} \\
	&= \left( \sum_{k=0}^\infty Q^k \right) \vec{1} \\
	&= N\vec{1}
\end{align*}
\end{proof}

\section{The Two Player Game}
\label{sec:twoplayers}

Every game of Knockout eventually leads to a final round where two players go head to head in a flurry of running and shots with each player's goal being to get in position behind the other player and eliminate them. In practice this tends to be the most exhausting part of the game, however in this model we assume players are equally skilled and ignore matters of player fatigue. Since every game of Knockout eventually reduces to this final round, it makes sense to examine this round first. Notice the two-player game is identical to the final round of an $n$-player game.

The two-player game has seven game-states. Since there are only two players, the balls never change hands. Player 1 ($P_1$) will always shoot the red ball, and Player 2 ($P_2$) will always shoot the blue ball. This means $P_1$ shoots first in each of the game-states, which are enumerated below.
\begin{enumerate}
	\item [(G1)] $P_2$ can win: $P_1$ shooting a long shot, followed by $P_2$ shooting a long shot
	\item [(G2)]$P_2$ can win: $P_1$ shooting a short shot, followed by $P_2$ shooting a short shot
	\item [(G3)]$P_2$ can win: $P_1$ shooting a short shot, followed by $P_2$ shooting a long shot
	\item [(G4)]$P_1$ can win: $P_1$ shooting a long shot, followed by $P_2$ shooting a short shot
	\item [(G5)]$P_1$ can win: $P_1$ shooting a short shot, followed by $P_2$ shooting a short shot
	\item [(G6)]$P_2$ is eliminated which means $P_1$ is the winner
	\item [(G7)]$P_1$ is eliminated which means $P_2$ is the winner
\end{enumerate}
Figure~\ref{fig:directedgraph} is a weighted directed graph with the game-states as vertices and the weight of each edge as the probability of moving from one game-state to another. The absorbing game-states (G6) and (G7) are represented using rectangles instead of circles.

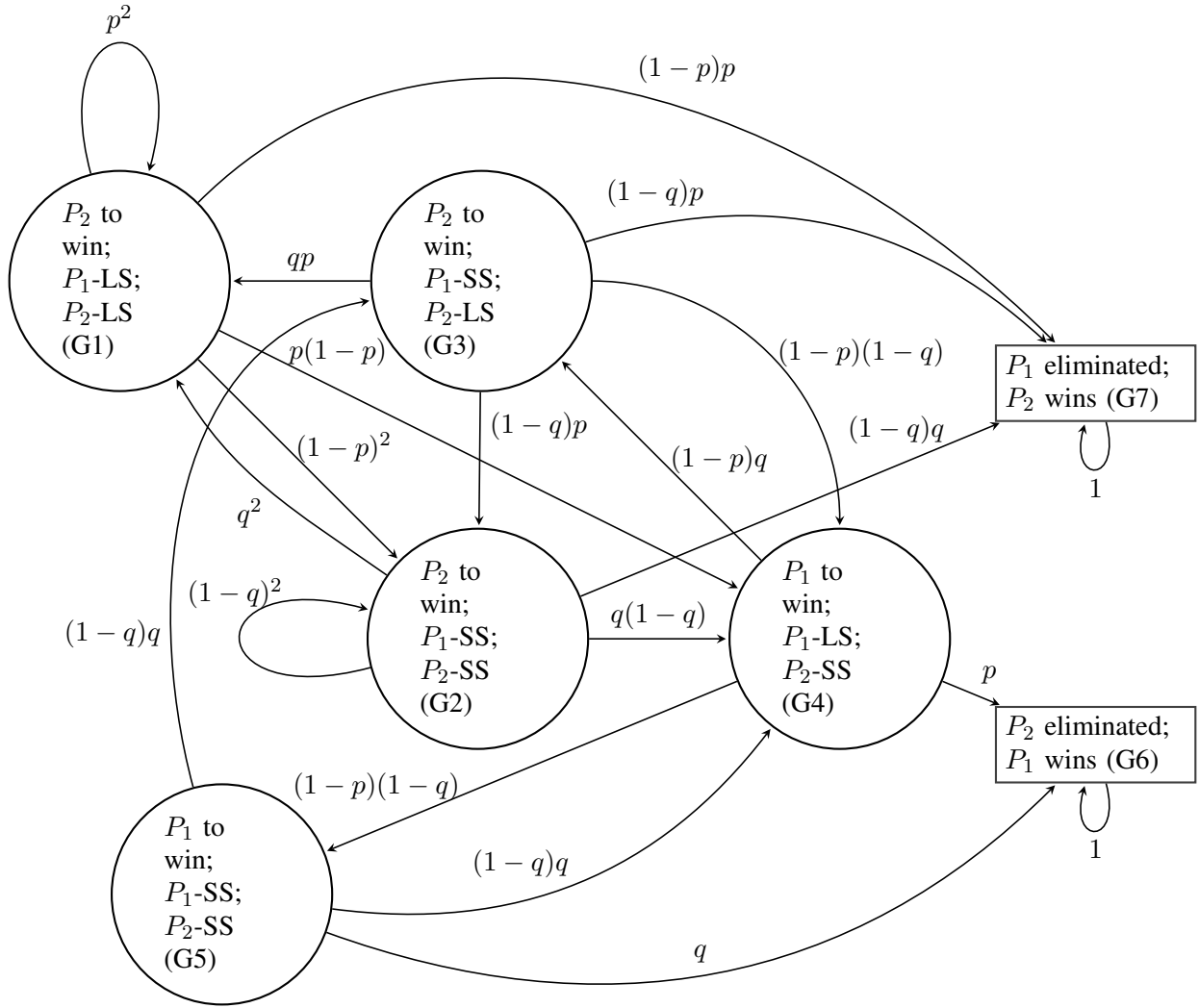
\begin{figure}
\begin{center}
    \begin{tikzpicture}[
            > = stealth, 
            shorten > = 1pt, 
            auto,
            node distance = 5cm, 
            semithick 
        ]

        \tikzstyle{state}=[circle,
            draw = black,
            thick,
            fill = white,
            minimum size = 4mm
]

	\tikzstyle{abstate}=[rectangle,thick,draw=black!75,
  			  fill=white,minimum size=4mm, text width=2.5cm]

        \node[state, text width=1.6cm] (G1) {$P_2$ to win; \\  $P_1$-LS; $P_2$-LS (G1)};
        \node[state, text width=1.6cm] (G2) [below right of =G1, node distance=7cm] {$P_2$ to win; \\ $P_1$-SS; $P_2$-SS (G2)};
        \node[state, text width=1.6cm] (G3) [right of=G1] {$P_2$ to win; \\ $P_1$-SS; $P_2$-LS (G3)};
        \node[state, text width=1.6cm] (G4) [below right of=G3, node distance=7cm] {$P_1$ to win; \\ $P_1$-LS; $P_2$-SS (G4)};
        \node[state, text width=1.6cm] (G5) [below left of=G2] {$P_1$ to win; \\ $P_1$-SS; $P_2$-SS (G5)};
        \node[abstate] (G7) [above right of=G4] {$P_1$ eliminated; $P_2$ wins (G7)};
        \node[abstate] (G6) [below of=G7] {$P_2$ eliminated; $P_1$ wins (G6)};

        \path[->] (G1) edge[loop above] node {$p^2$} (G1)
                                edge node [xshift=-5pt, yshift=-5pt] {$(1-p)^2$} (G2)
			edge node [very near start, xshift=-2pt, yshift=-5pt] {$p(1-p)$} (G4)
        			edge [out=45, in=135] node {$(1-p)p$} (G7)
        		   (G2) edge [out=145, in=-60] node [xshift=2pt]  {$q^2$} (G1)
			edge [loop left, anchor=south] node[xshift=-2pt, yshift=8pt] {$(1-q)^2$} (G2)
			edge node {$q(1-q)$} (G4)
			edge node [very near end, xshift=2pt, yshift=-4pt] {$(1-q)q$} (G7)
        		   (G3) edge [anchor=south] node {$qp$} (G1)
			edge [near start] node {$(1-q)p$} (G2)
			edge [anchor=west, out=0, in = 90] node {$(1-p)(1-q)$} (G4)
			edge [bend left, near start] node {$(1-q)p$} (G7)
		  (G4) edge [anchor=west] node {$(1-p)q$} (G3)
			edge [anchor=south, very near end] node [yshift=8pt] {$(1-p)(1-q)$} (G5)
			edge node {$p$} (G6)
		  (G5) edge [out=105, in = 190, near start] node {$(1-q)q$} (G3)
			edge [bend right] node [yshift=-2pt]{$(1-q)q$} (G4)
			edge [out=-20, in=-135] node {$q$} (G6)
		  (G6) edge [loop below] node {$1$} (G6)
		  (G7) edge [loop below] node {$1$} (G7);
    \end{tikzpicture}
\end{center}
\caption{Directed graph representing a 2-person game of Knockout}
\label{fig:directedgraph}
\end{figure}

This graph is encoded in the following transition matrix
\begin{equation}\label{eqn:twoplayer}
T = \begin{bmatrix}
p^2 & (1-p)^2 & 0 & p(1-p) & 0 & 0 & (1-p)p \\
q^2 & (1-q)^2 & 0 & q(1-q) & 0 & 0 & (1-q)q \\
qp & (1-q)p & 0 & q(1-p) & 0 & 0 & (1-q)p \\
0 & 0 & (1-p)q & 0 & (1-p)(1-q) & p & 0 \\
0 & 0 & (1-q)q & 0 & (1-q)(1-q) & q & 0 \\
0 & 0 & 0 & 0 & 0 & 1 & 0 \\
0 & 0 & 0 & 0 & 0 & 0 & 1
\end{bmatrix}
\end{equation}

\begin{theorem}
\label{thm:2player}
	In a round of Knockout with two players, $P_1$ followed by $P_2$, where both players have a long shot percentage of $p$ and a short shot percentage of $q$, the probability that $P_2$ is eliminated is $\dfrac{1}{3-p}$.
\end{theorem}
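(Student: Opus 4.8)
The plan is to apply Theorem~\ref{thm:markovsoln}. A two-player round begins with both players shooting free throws, i.e. in state (G1), and ``$P_2$ is eliminated'' is exactly absorption into (G6). Thus the desired probability is the $(\text{G1},\text{G6})$ entry of $NR$, where $Q$ and $R$ are the transient and absorbing blocks read off from the transition matrix \eqref{eqn:twoplayer}. Since inverting the $5\times 5$ matrix $I-Q$ by hand is unpleasant, I would instead isolate the single relevant column: writing $\vec b$ for the (G6)-column of $NR$ and $\vec r=(0,0,0,p,q)^{\mathsf T}$ for the (G6)-column of $R$, the identity $(I-Q)(NR)=R$ gives $(I-Q)\vec b=\vec r$, equivalently $\vec b=Q\vec b+\vec r$. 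This is precisely first-step analysis: $b_i$ is the probability of absorption in (G6) starting from state $i$, and the number we want is $b_1$.

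Next I would write the five scalar equations of $\vec b=Q\vec b+\vec r$ from the rows of \eqref{eqn:twoplayer} and solve from the top down. The (G1) and (G2) equations involve only $b_1,b_2,b_4$; factoring $1-p^2=(1-p)(1+p)$ in the first and dividing by $1-(1-q)^2=q(2-q)$ in the second collapses them to the clean relations $b_4=2b_1$ and $b_2=b_1$. Feeding these into the (G3) equation writes $b_3$ as a scalar multiple of $b_1$ (the multiple simplifies to $1-p+q$), and the (G5) equation then expresses $b_5$ through $b_3$, hence through $b_1$. At this point every $b_i$ is a known function of $b_1$, and I would substitute all of them into the one remaining equation, coming from row (G4).

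The step I expect to be both the crux and the main obstacle is this last substitution, because a priori $b_1$ looks like a messy rational function of $p$ and $q$. The point is that the $q$-dependence must cancel. Concretely, the coefficient of $b_1$ contracts via the identity $q(2-q)+(1-q)^2=1$, and after clearing the denominator $2-q$ the equation factors as $(3-p)(1+p-q)\,b_1=1+p-q$. Cancelling the common factor $1+p-q$ (which is nonzero away from the degenerate corner $p=0,\,q=1$) leaves $b_1=\dfrac{1}{3-p}$, independent of the short-shot percentage $q$. As a sanity check on the algebra I would confirm the endpoints $p=0$, giving $\tfrac13$, and $p=1$, giving $\tfrac12$, directly against the weighted graph of Figure~\ref{fig:directedgraph}.
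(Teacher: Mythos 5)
Your proposal is correct, and at bottom it is the same proof as the paper's: the paper's entire argument is the one-line invocation of Theorem~\ref{thm:markovsoln} on the matrix of Equation~\ref{eqn:twoplayer}, with the linear algebra left to the reader (or a computer), while you actually carry that computation out, extracting the single (G6)-column of $NR$ from $(I-Q)\vec{b}=\vec{r}$ rather than inverting $I-Q$. Your algebra checks: $b_2=b_1$, $b_4=2b_1$, $b_3=(1-p+q)b_1$, $b_5=\bigl((1-q)b_3+1\bigr)/(2-q)$, and row (G4) then collapses, via $q(2-q)+(1-q)^2=1$, to $(3-p)(1+p-q)b_1=1+p-q$, giving $b_1=1/(3-p)$. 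One point deserves emphasis: your numbers are consistent only with a \emph{corrected} row (G3). As printed, Equation~\ref{eqn:twoplayer} has $(1-q)p$ in the (G3),(G2) entry, so that row sums to $2p+q-2pq\neq 1$; running your scheme with the printed entry gives $b_3=(p+2q-2pq)b_1$ and a $q$-dependent final answer different from $1/(3-p)$. The correct entry (both players miss, probability $(1-q)(1-p)$) is exactly what your multiplier $1-p+q$ presupposes, so although you say you read the rows off Equation~\ref{eqn:twoplayer}, you have in effect silently fixed a typo in it (Figure~\ref{fig:directedgraph} has a similar error on its (G3) edges); in that sense your derivation is more verifiable than the paper's, whose stated theorem is nonetheless correct. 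Two small caveats: the ``collapse'' of rows (G1) and (G2) to $b_2=b_1$, $b_4=2b_1$ requires solving a $2\times 2$ system whose determinant is $1+p-q$, so it carries the same nondegeneracy proviso you invoke at the end; and the sanity check at $p=1$ is not legitimate as stated, since for $p=1$ the chain never leaves (G1) and the absorption probability is $0$ --- the value $\tfrac12$ is only the limit as $p\to 1^-$, which is precisely why the Corollary's interval $\left[\tfrac13,\tfrac12\right)$ is half-open.
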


\begin{proof}
	The result follows from an application of Theorem~\ref{thm:markovsoln} to the transition matrix $T$ in Equation~\ref{eqn:twoplayer}.
\end{proof}

\begin{cor}
	In a game of Knockout with two players, the probability of $P_1$ winning lies in the interval $\left[\frac{1}{3},\frac{1}{2}\right)$.
\end{cor}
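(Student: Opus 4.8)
The plan is to read off the winning probability for $P_1$ directly from Theorem~\ref{thm:2player} and then study it as a function of the single parameter $p$. By the rules in Section~\ref{subsec:rules}, $P_1$ is crowned champion exactly when $P_2$ is eliminated, which is the absorbing game-state (G6). Hence the probability that $P_1$ wins equals the probability that $P_2$ is eliminated, which Theorem~\ref{thm:2player} evaluates to $\frac{1}{3-p}$. Notice that this quantity does not involve the short-shot percentage $q$ at all, so the whole corollary reduces to understanding the one-variable function $g(p) = \frac{1}{3-p}$ over the admissible values of $p$.

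Next I would pin down the domain of $p$. As a shooting percentage $p$ lies in $[0,1]$, but for Theorem~\ref{thm:2player} to apply the chain must genuinely be absorbing, so that the fundamental matrix $N = (I-Q)^{-1}$ exists. Inspecting the $(\text{G1},\text{G1})$ entry of the transition matrix in Equation~\ref{eqn:twoplayer} (equivalently, the self-loop at (G1) in Figure~\ref{fig:directedgraph}), this weight is $p^2$; when $p=1$ the loop has weight $1$, so (G1) can never be left and the chain fails to be absorbing. Thus a perfect long shooter is not a legitimate instance of the model, and the relevant parameter range is $p \in [0,1)$; one should also keep $q>0$ so that an elimination state is reachable (for instance, at $p=0$ absorption still occurs through the edge from (G5) to (G6) of weight $q$), ensuring the endpoint value $g(0)$ is actually attained.

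Finally I would establish monotonicity and evaluate the endpoints. Differentiating gives $g'(p) = \frac{1}{(3-p)^2} > 0$, so $g$ is strictly increasing and continuous on $[0,1)$, hence maps this interval bijectively onto $\left[\,g(0),\ \lim_{p \to 1^-} g(p)\,\right)$. Since $g(0) = \frac{1}{3}$ is achieved at $p=0$ while $g(p) \to \frac{1}{2}$ only in the limit as $p \to 1^-$, the probability that $P_1$ wins ranges over exactly $\left[\frac{1}{3}, \frac{1}{2}\right)$.

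The only delicate point, and the step I expect to be the main obstacle, is justifying the openness of the right endpoint: rather than naively substituting $p=1$ into $\frac{1}{3-p}$, one must argue that $p=1$ is excluded because a guaranteed long shot collapses the absorbing structure that Theorem~\ref{thm:2player} depends on, so that $\frac{1}{2}$ is a supremum and never a maximum. The remaining monotonicity-and-endpoint analysis is entirely routine.
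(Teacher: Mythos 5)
Your proposal is correct and takes essentially the same route as the paper: read off $P_1$'s win probability $\frac{1}{3-p}$ from Theorem~\ref{thm:2player}, rule out $p=1$ because a perfect long shot makes the game never end, and let $p$ range over $[0,1)$ so that $\frac{1}{3-p}$ sweeps out $\left[\frac{1}{3},\frac{1}{2}\right)$. Your added care about monotonicity, the attainment of the left endpoint, and the need for $q>0$ (so the chain is genuinely absorbing) only makes explicit what the paper's terser proof leaves implicit.
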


\begin{proof}
\label{cor:2player}
	A game of Knockout with two players consists of a single round with two players. By Theorem~\ref{thm:2player}, the probability that $P_1$ wins $\dfrac{1}{3-p}$. If $p=1$, then the game would never end as both players would never miss their long shot. So $p \in [0,1)$, which forces the probability that $P_1$ wins to lie in the desired interval.
\end{proof}

\begin{remark}
	It is surprising to note that in the two player game, the probability of making a short shot does not affect each player's win probability.
\end{remark}

\section{More than two players}
\label{sec:nplayers}
Although every game of Knockout will eventually lead to a showdown between two players, most games begin with many more than two players. In order to fully analyze which starting position is best, we must investigate the $n$-player round where $n>2$ and see how to transition from the $n$-player round to the $(n-1)$-player round. The good news is that as $n$ increases, the number of game-states in a round grows in a linear fashion.

\begin{prop}
\label{prop:gamestates}
	A round of Knockout with $n>2$ players has $6n$ game-states.
\end{prop}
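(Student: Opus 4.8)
The plan is to count the game-states by pinning down the data that determines one and then multiplying. At any instant of a round there is a unique player in danger (at the front of the line) and a unique chaser (directly behind); since all players are equally skilled, the only additional information a state must record is, for the pair of shots about to be taken, which ball each of these two players holds and whether each shot is a long shot or a short shot. Because the red ball is always shot before the blue ball, recording which ball the danger player holds is the same as recording the shooting order, i.e.\ whether the danger player or the chaser shoots first. Thus a game-state is a triple: (i) the player in danger, giving $n$ choices; (ii) whether the danger player shoots first or second (equivalently holds red or blue), giving $2$ choices; and (iii) the ordered pair of shot types for the danger player and the chaser. The chaser-first phase is exactly the situation flagged in the model in which a single made shot with the red ball ends the round in one step. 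I would then show that of the four a priori shot-type pairs only three occur, namely $(\text{long},\text{long})$, $(\text{short},\text{long})$, and $(\text{short},\text{short})$, so that the total is $n\cdot 2\cdot 3=6n$.

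The heart of the argument is the invariant that the combination ``danger player long, chaser short'' never occurs; equivalently, if the chaser is on a short shot then so is the danger player. I would prove this by induction on the successive pairs of shots. A player shoots long exactly when freshly arrived into the current duel (at the opening of the round, or upon stepping to the front after the previous danger player has escaped) and switches to short shots only after missing. At the start both active players are fresh, so the state is $(\text{long},\text{long})$ and the claim holds. For the step, from a state obeying the invariant either the danger player makes the shot and escapes, whereupon the former chaser becomes the new danger player and a brand-new player enters as chaser on a long shot, so the new chaser's shot is long and the invariant persists; or no one escapes, in which case the round continues only if both active players miss, yielding $(\text{short},\text{short})$; the remaining outcome is the chaser making the basket, which is an elimination and creates no new transient state. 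Since this case analysis never refers to the shooting order, it covers both phases of (ii) at once.

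Assembling the pieces gives $6n$ game-states, and I would finish by checking that the three coordinates are independent pieces of data, so that the descriptions are pairwise distinct and each is a genuine state of the round's chain. I expect step (iii) to be the main obstacle: one must be careful in the chaser-first phase, where the chaser shoots before the danger player, to verify that the invariant still holds at every state boundary (before each red-then-blue pair), and one must track correctly how the two balls are reassigned when a player escapes, so that the shooting order recorded in (ii) is the right one. The factor $n$ and the factor $2$ are then routine, and the factor $3$ is exactly the content of the invariant.
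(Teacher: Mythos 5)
Your final count lands on $6n$, but the argument contains two compensating errors and therefore does not prove the proposition. The first error is in the claimed independence of your coordinates (ii) and (iii): the role-wise pair (danger long, chaser long) occurs \emph{only} when the danger player shoots first. Since $P_k$ receives a ball before $P_{k+1}$ does, $P_k$'s long shot always precedes $P_{k+1}$'s long shot, so the state ``chaser shoots a long shot, then the danger player shoots a long shot'' is impossible. This is exactly the first of the three impossible families in the paper's proof ($P_{k+1}$ takes LS, then $P_k$ takes LS), and your invariant --- which rules out only ``danger long, chaser short'' in the two orderings, i.e.\ two of the three impossible families --- never sees it. Consequently your product $n\cdot 2\cdot 3$ overcounts the transient states: the danger-first ordering admits three shot pairs (Blocks 1, 2, 4 of the paper) but the chaser-first ordering admits only two (Blocks 5 and 3), for a total of $5n$ transient states, not $6n$.

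The second error is that you omit the $n$ absorbing states ``$P_i$ is eliminated,'' which the paper explicitly counts among the $6n$ game-states (Block 6 of its enumeration); they are genuine states of each round's chain, not bookkeeping. Your overcount of transient states by $n$ exactly cancels this omission, which is why you still arrive at $6n$ --- right answer, wrong proof. The repair is straightforward: strengthen your invariant to the paper's statement that $P_k$ must shoot a long shot before $P_{k+1}$ shoots a long shot (your induction nearly gives this; when the danger player escapes and a fresh chaser enters on a long shot, one must also track that the resulting ordering is chaser-first precisely when the new danger player is on a short shot). That kills all three impossible families, yielding $8n-3n=5n$ transient states, and adding the $n$ elimination states gives $6n$.
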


\begin{proof}
	Since there are two balls and two different types of shots there are 4 combinations of shot selections: a long shot followed by a long shot, a long shot followed by a short shot, a short shot followed by a long shot, and a short shot followed by a short shot. Two players must be concurrent in line in order to be shooting at the same time (with the understanding that the last player and first player are also considered concurrent in line. Enumerate the players $P_1, P_2,...,P_n$. Since the players must be concurrent in line, they can either be in the shooting order of $P_k$ followed by $P_{k+1}$ or $P_{k+1}$ followed by $P_k$. Therefore, there are $4*2n = 8n$ transient game-states. Adding the $n$ absorbing states (one for eliminating each player) gives $9n$ game-states. However, exactly $3n$ of these game-states are impossible:
\begin{itemize}
	\item $P_{k+1}$ cannot shoot a long shot followed by $P_{k}$ shooting a long shot
	\item $P_{k+1}$ cannot shoot a short shot followed by $P_k$ shooting a long shot
	\item $P_k$ cannot shoot a long shot followed by $P_{k+1}$ shooting a short shot
\end{itemize}
 These impossible game-states all stem from the fact that $P_k$ must shoot a long shot before $P_{k+1}$ shoots a long shot. Indeed the first two families of impossible states follow immediately from this fact. The last family of impossible states cannot occur because in order for $P_{k+1}$ to be shooting a short shot, $P_{k+1}$ must have already shot a long shot. This gives exactly $6n$ game-states.
\end{proof}

Notice when $n=2$, there are 7 game-states as opposed to $12$ game-states. The reason for this is explained in Remark \ref{rmk:gamestates} below.

Since there are $6n$ game-states, each transition matrix has size $6n \times 6n$ so while game-states grow linearly, the transition matrix grows quadratically as $n$ increases. The good news is that ordering the game-states according to the type of shots that players take results in the transition matrix having a predictable pattern. It can be written as a $6 \times 6$ block matrix using only the variables $p$ and $q$ and the four matrices $I$, the $n \times n$ identity matrix, $0$, the $n \times n$ zero matrix, $A = A_n$, an $n \times n$ cyclic permutation matrix below, and $B = B_n = A^2$, another cyclic permutation matrix. Below are examples of $A$ and $B$ in the case of $n=3$ and $n=4$.
$$A_3 = \begin{bmatrix} 0 & 1 & 0 \\ 0 & 0 & 1 \\ 1 & 0 & 0 \end{bmatrix}, \hspace{1cm} B_3 = \begin{bmatrix} 0 & 0 & 1 \\ 1 & 0 & 0 \\ 0 & 1  & 0 \end{bmatrix}$$
\vspace{0.5cm}
$$A_4 = \begin{bmatrix} 0 & 1 & 0 & 0 \\ 0 & 0 & 1 & 0 \\ 0 & 0 & 0 & 1 \\ 1 & 0 & 0 & 0 \end{bmatrix}, \hspace{1cm} B_4 = \begin{bmatrix} 0 & 0 & 1 & 0 \\ 0 & 0 & 0 & 1 \\ 1 & 0 & 0 & 0 \\ 0 & 1 & 0 & 0 \end{bmatrix}, \hspace{1cm} I_4 = \begin{bmatrix} 1 & 0 & 0 & 0 \\ 0 & 1 & 0 & 0 \\ 0 & 0 & 1 & 0 \\ 0 & 0 & 0 & 1 \end{bmatrix}$$

With this notation, the transition matrix for the $n$ player case is the following block diagonal matrix (where the lines indicate how the matrix is in cannonical form with $Q$, $R$, $0$, and $I$.
\begin{equation}\label{transitionmatrix}
T =\begin{bmatrix}
p^2 B & (1-p)^2I  & 0 & 0 & p(1-p)A & \rvline & (1-p)pI \\
 & & & & & \rvline & \\
q^2B & (1-q)^2I & 0 & 0 & q(1-q)A & \rvline & (1-q)qI \\
 & & & & & \rvline & \\
0 & 0 & (1-q)^2I & (1-q)qA & 0 & \rvline & qI \\
 & & & & & \rvline & \\
qpB & (1-q)(1-p)I & 0 & 0 & q(1-p)A & \rvline & (1-q)pA \\
 & & & & & \rvline & \\
0 & 0 & (1-p)(1-q)I & (1-p)qA & 0 & \rvline & pI \\
\hline
 & & & & & \rvline & \\
0 & 0 & 0 & 0 & 0 & \rvline & I
\end{bmatrix}
\end{equation}

The game-states are as follows, broken up into the blocks of $n$ that correspond to the block matrices in the transition matrix in Equation~\ref{transitionmatrix}:

\begin{multicols}{2}
{\bf \underline {Block 1}}
\begin{enumerate}
	\item [$(1)$]$P_1$ takes LS, then $P_2$ takes LS
	\item [$(2)$]$P_2$ takes  LS, then $P_3$ takes LS \\
\vdots

	\item [{$(n-1)$}] $P_{n-1}$ takes LS, then $P_n$ takes LS
	\item [$(n)$]$P_{n}$ takes LS, then $P_1$ takes LS
\end{enumerate}

{\bf \underline {Block 2}}
\begin{enumerate}
	\item[$(n+1)$] $P_1$ takes SS, then $P_2$ takes SS
	\item [$(n+2)$]$P_2$ takes SS, then $P_3$ takes SS \\
\vdots

	\item [{$(2n-1)$}] $P_{n-1}$ takes SS, then $P_n$ takes SS
	\item [$(2n)$]$P_{n}$ takes SS, then $P_1$ takes SS
\end{enumerate}

{\bf \underline {Block 3}}
\begin{enumerate}
	\item[$(2n+1)$] $P_2$ takes SS, then $P_1$ takes SS
	\item [$(2n+2)$]$P_3$ takes SS, then $P_2$ takes SS \\
\vdots

	\item [{$(3n-1)$}] $P_{n}$ takes SS, then $P_{n-1}$ takes SS
	\item [$(3n)$]$P_{1}$ takes SS, then $P_{n}$ takes SS
\end{enumerate}

\columnbreak

{\bf \underline {Block 4}}
\begin{enumerate}
	\item[$(3n+1)$] $P_1$ takes SS, then $P_2$ takes  LS
	\item [$(3n+2)$]$P_2$ takes SS, then $P_3$ takes  LS \\
\vdots

	\item [{$(4n-1)$}] $P_{n-1}$ takes SS, then $P_n$ takes LS
	\item [$(4n)$]$P_{n}$ takes SS, then $P_1$ takes LS
\end{enumerate}

{\bf \underline {Block 5}}
\begin{enumerate}
	\item[$(4n+1)$] $P_2$ takes LS, then $P_1$ takes SS
	\item [$(4n+2)$]$P_3$ takes LS, then $P_2$ takes SS \\
\vdots

	\item [{$(5n-1)$}] $P_{n}$ takes LS, then $P_{n-1}$ takes SS
	\item [$(5n)$]$P_{1}$ takes LS, then $P_{n}$ takes SS
\end{enumerate}

{\bf \underline {Block 6 - Absorbing States}}
\begin{enumerate}
	\item[$(5n+1)$] $P_1$ is eliminated
	\item [$(5n+2)$]$P_2$ is eliminated \\
\vdots

	\item [$(6n)$]$P_{n}$ is eliminated
\end{enumerate}
\end{multicols}

\begin{remark}
\label{rmk:gamestates}
	The assumption that $n>2$ in Proposition \ref{prop:gamestates} is crucial. As one sees from Figure \ref{fig:directedgraph}, the 2-player game has only seven game-states as opposed to 12. The reason for the reduction of 5 game-states is due to the fact that with only two players, $P_1$ will always shoot before $P_2$. This eliminates one game-state from each of of the 5 transient states blocks (Blocks 1-5 above). In the game-state labeling of Figure \ref{fig:directedgraph}, (G1) corresponds to the lone game-state in Block 1, (G2) corresponds to Block 2, (G3) corresponds to Block 4, (G4) corresponds to Block 5, and (G5) corresponds to Block 3.
\end{remark}

Now that the transition matrix for an $n$-player round is known, notice that when a player is eliminated from a round with $n$ players, the game changes to an $(n-1)$-player round. Further, the $(n-1)$ player round begins with both players shooting a long shot. So once the probabilities of each player being eliminated from an $n$-player round is known, recursion can be used to find the probability that of each player winning an $n$-player game. The only difficulty is in keeping track of the positions of each player in the new $(n-1)$-player round when $P_i$ is eliminated from the $n$-player round. Notice the game starts with $P_{i+1}$ taking the first (long) shot and $P_{i+2}$ taking the second (long) shot. This is recorded in the following proposition whose proof is omitted because it is a straightforward matter of careful bookkeeping.

\begin{prop}
Let $E_i$ be the probability that $P_i$ is eliminated in an $n$-player round and let $W_j$ be the probability that $P_j$ wins an $(n-1)$-player game. Then the probability that $P_k$ wins an $n$-player game is
$$\sum_{i=1}^{n-1} E_{[j+i-1]+1}W_{[n-2-i] +1}$$
where $[j+i-1]$ is the unique representation of $j+i-1$ mod $n$ that lies between 0 and $n-1$ and $[n-2-i]$ is the unique representative of $n-2-i$ mod $(n-1)$ that lies between $0$ and $n-2$.
\end{prop}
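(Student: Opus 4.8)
The plan is to prove the recursion by a first-round decomposition, conditioning on which player is eliminated during the opening round. By the model, a round of the $n$-player game is an absorbing Markov chain whose absorbing states are exactly the $n$ events ``$P_e$ is eliminated''; applying Theorem~\ref{thm:markovsoln} to the start state shows these occur with probabilities $E_1,\dots,E_n$ that sum to $1$ and partition the sample space. Since $P_k$ can win the $n$-player game only if $P_k$ survives the opening round, the law of total probability gives
$$\Pr(P_k \text{ wins}) \;=\; \sum_{e \ne k} E_e \cdot \Pr\big(P_k \text{ wins the continuation} \mid P_e \text{ eliminated first}\big),$$
where the term $e=k$ contributes $0$ because an eliminated player cannot win.

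Next I would argue that each conditional probability is exactly some value $W_\ell$. Once $P_e$ is eliminated, the surviving $n-1$ players begin a round indistinguishable from the start of a fresh $(n-1)$-player game: the two leadoff players each shoot a long shot from the free-throw line. By the Markov property together with the assumption that all players are equally skilled, the probability that a given survivor wins the continuation depends only on that survivor's position in the new line and not on the history of the first round. Hence the conditional probability equals $W_{\ell}$, where $\ell=\sigma^{(e)}(k)$ is the label that $P_k$ receives in the relabeled $(n-1)$-player game. The relabeling $\sigma^{(e)}$ is read off from the rules: the player whose basket eliminates $P_e$ cycles to the back of the line and the two balls advance to the next pair, which fixes the leadoff shooter of the new round; counting positions cyclically from that shooter, while omitting $P_e$, determines $\sigma^{(e)}$ as a cyclic shift of the labels $1,\dots,n-1$.

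The remaining work, and the main obstacle, is purely the index bookkeeping needed to reconcile this cyclic shift with the two modular expressions in the statement. First I would verify that as the summation index $i$ runs from $1$ to $n-1$, the subscript $[j+i-1]+1 \equiv k+i \pmod{n}$ enumerates precisely the $n-1$ players other than $P_k$, so that reindexing the sum above by $e=[k+i-1]+1$ is legitimate. Then I would confirm the identity $\sigma^{(e)}(k)=[n-2-i]+1$: writing the eliminated player as $P_e$ with $e \equiv k+i \pmod{n}$, the position of $P_k$ in the new line is governed by $k-e$ modulo $n$, shifted according to which survivor leads off. The delicate points are the passage from arithmetic modulo $n$ (the old line) to arithmetic modulo $n-1$ (the new, one-shorter line) and the single off-by-one occurring at the wrap-around, namely when $P_k$ lands in the final new position. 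Tracking these two reductions carefully—precisely the ``careful bookkeeping'' the statement alludes to—converts the cyclic shift into the claimed representative $[n-2-i]+1$ and completes the proof.
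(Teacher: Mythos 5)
Your proposal is correct, and it is essentially the proof the paper has in mind: the paper omits the argument entirely, calling it ``a straightforward matter of careful bookkeeping,'' and its surrounding discussion describes exactly your first-round decomposition (condition on which player is eliminated in the opening round, then use the Markov property and the equal-skill assumption to reduce each conditional continuation probability to some $W_\ell$). There is, however, one substantive point where your bookkeeping diverges from the paper's own prose, and you are the one who is right. The paper claims that when $P_i$ is eliminated, ``the game starts with $P_{i+1}$ taking the first (long) shot and $P_{i+2}$ taking the second (long) shot.'' But $P_{i+1}$ is precisely the player who just made the eliminating basket, so by the rules they cycle to the end of the line and the leadoff shooters of the new round are $P_{i+2}$ and $P_{i+3}$ --- which is your reading. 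Your reading is also the only one consistent with the stated formula: writing the eliminated player as $P_e$ with $e \equiv k+i \pmod{n}$, the new line is $P_{e+2}, P_{e+3}, \dots, P_{e+n-1}, P_{e+1}$, so $P_k$ lands in new position $n-1-i$ when $1 \le i \le n-2$, and in position $n-1$ when $i=n-1$ (the wrap-around case you flagged, in which $P_k$ is the eliminator); this is exactly $[n-2-i]+1$ with the reduction taken mod $n-1$. Had you instead followed the paper's prose, the second factor would come out as $W_{n-i}$, contradicting the proposition. So the index verification you deferred does close, and your proposal both proves the statement (modulo the paper's typo of $j$ for $k$ in the displayed sum) and silently corrects the off-by-one slip in the paper's own description of how the $(n-1)$-player round begins.
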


\setcounter{MaxMatrixCols}{18}
\begin{example}\label{example:threeplayer}
	Consider the case of the three player game, played with players $A$, $B$, and $C$, starting in that order. Then using the transition matrix from Equation \ref{transitionmatrix} becomes \newline
 \resizebox{1\textwidth}{!}{
$\begin{bmatrix}
0 & 0 & p^2 & 	(1-p)^2 & 0 & 0 & 	0 & 0 & 0 & 		0 & 0  & 0 & 		0 & p(1-p) & 		0 & (1-p)p & 0 & 0 \\
p^2 & 0 & 0 & 	0 & (1-p)^2 & 0 & 	0 & 0 & 0 & 		0 & 0 & 0 & 		0 & 0 & p(1-p) & 	0 & (1-p)p & 0 \\
0 & p^2 & 0 & 	0 & 0 & (1-p)^2 & 	0 & 0 & 0 &		0 & 0 & 0 & 		p(1-p) & 0 & 0 &	0 & 0 & (1-p)p \\

0 & 0 & q^2 & 	(1-q)^2 & 0 & 0 &	0 & 0 & 0 & 		0 & 0 & 0 &		0 & q(1-q) & 0 & 	(1-q)q & 0 & 0 \\
q^2 & 0 & 0 & 	0 & (1-q)^2 & 0 & 	0 & 0 & 0 &		0 & 0 & 0 & 		0 & 0 & q(1-q) & 	0 & (1-q)q & 0 \\
0 & q^2 & 0 & 	0 & 0 & (1-q)^2 & 	0 & 0 & 0 & 		0 & 0 & 0 & 		q(1-q) & 0 & 0 & 	0 & 0 & (1-q)q \\

0 & 0 & 0 & 		0 & 0 & 0 & 		(1-q)^2 & 0 & 0 &	0 & (1-q)q & 0 &		0 & 0 & 0 & 		q & 0 & 0 \\
0 & 0 & 0 &		0 & 0 & 0 &		0 & (1-q)^2 & 0 & 	0 & 0 & (1-q)q & 		0 & 0 & 0 & 		0 & q & 0 \\
0 & 0 & 0 & 		0 & 0 & 0 &	 	0 & 0 & (1-q)^2 & 	(1-q)q & 0 & 0 & 		0 & 0 & 0 &		0 & 0 & q \\

0 & 0 & qp & 		(1-q)(1-p) & 0 & 0 & 0 & 0 & 0 & 		0 & 0 & 0 & 		0 & q(1-p) & 0 &	0 & (1-q)p & 0 \\
qp & 0 & 0 & 		0 & (1-q)(1-p) & 0 & 0 & 0 & 0 & 		0 & 0 & 0 &		0 & 0 & q(1-p) &	0 & 0 & (1-q)p \\
0 & qp & 0 & 		0 & 0 & (1-q)(1-p) & 0 & 0 & 0 & 		0 & 0 & 0 & 		q(1-p) & 0 & 0 & 	(1-q)p & 0 & 0 \\

0 & 0 & 0 & 		0 & 0 & 0 &		(1-p)(1-q) & 0 & 0 & 0 & (1-p)q & 0 & 	0 & 0 & 0 &		p & 0 & 0 \\
0 & 0 & 0 & 		0 & 0 & 0 &		0 & (1-p)(1-q) & 0 & 0 & 0 & (1-p)q & 	0 & 0 & 0 & 		0 & p & 0 \\
0 & 0 & 0 & 		0 & 0 & 0 &		0 & 0 & (1-p)(1-q) & (1-p)q & 0 & 0 & 	0 & 0 & 0 & 		0 & 0 & p \\

0 & 0 & 0 & 		0 & 0 & 0 &		0 & 0 & 0 &		0 & 0 & 0 &		0 & 0 & 0 &		1 & 0 & 0 \\
0 & 0 & 0 &		0 & 0 & 0 &		0 & 0 & 0 &		0 & 0 & 0 &		0 & 0 & 0 &		0 & 1 & 0 \\
0 & 0 & 0 &		0 & 0 & 0 &		0 & 0 & 0 &		0 & 0 & 0 &		0 & 0 & 0 &		0 & 0 & 1
\end{bmatrix}.$} \newline
Using Theorem \ref{thm:markovsoln}, the probability of Player $A$ being eliminated in the first round is
\begin{center}
\resizebox{1\textwidth}{!}{$\dfrac{\left(p^{3}-2\,p^{2}q+p\,q^{2}+p\,q-2\,q^{2}-p+5\,q-4\right)\left(p^{4}-2\,p^{3}q+p^{2}q^{2}-p^{3}+p^{2}q+4\,p^{2}-3\,p\,q-3\,p+3\right)\left(-1\right)}{\left(p^{4}-2\,p^{3}q+p^{2}q^{2}-3\,p^{3}+5\,p^{2}q-2\,p\,q^{2}+4\,p^{2}-7\,p\,q+4\,q^{2}+3\,p-8\,q+7\right)\left(p^{4}-2\,p^{3}q+p^{2}q^{2}-p^{3}+p^{2}q+4\,p^{2}-3\,p\,q-3\,p+3\right)}$}.
\end{center}
Notice that in contrast to the 2-player case, the probability of making a short shot, $q$, matters here. This expression is also much more complicated than in the two player case. For this reason it does not seem like knowing the general solution for larger games will lead to many insights. In Section \ref{sec:numeric}, we switch to a numerical approach and analyze the data to identify trends.
\end{example}

\section{Expected Length of a Game}
\label{sec:expectedlength}
In this section we compute the expected number of steps before the game ends. By steps, we mean traversing one edge in the directed graph. So a single step usually consists of two shots -- one shot of each ball. Notice not every single step consists of exactly two shots. A player shooting the blue ball can sometimes be eliminated if the player behind them makes their shot with the red ball first. Then the player with the blue ball is eliminated and does not have an opportunity to shoot the blue ball in that round. This can be seen in Figure~\ref{fig:directedgraph} when moving from either of the game-states (G4) or (G5) to game-state (G6).

Since our game is modeled by an absorbing Markov chain, we use the following result which is well-known for general absorbing Markov chains, but which we have adapted to Knockout.

\begin{theorem}\label{thm:expectedlength}
	The expected number of steps before someone is eliminated in a round of Knockout with $2$ or more players is
$$\frac{\left(p+q-3\right)\left(p^{2}-p\,q-2\,p+2\,q+1\right)\left(-1\right)}{\left(q\right)\left(p-3\right)\left(p-1\right)\left(p-q+1\right)}$$
where $p$ is the probability of making a long shot and $q$ is the probability of making a short shot. \\
\end{theorem}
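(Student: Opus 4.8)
The plan is to invoke Theorem~\ref{thm:expected}, which identifies the expected number of steps with the first entry of $N\vec{1} = (I-Q)^{-1}\vec{1}$, equivalently the first coordinate of the unique vector $\vec{t}$ solving $(I-Q)\vec{t} = \vec{1}$. Since a round always begins in game-state $(1)$ (the top of Block 1, $P_1$-LS then $P_2$-LS), it is precisely the first coordinate of $\vec{t}$ that we want.

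The structural observation that makes this tractable is that the transient block $Q$ of the transition matrix in Equation~\ref{transitionmatrix} is assembled entirely from scalar multiples of the $n \times n$ matrices $I$, $A$, and $B$, each of which is a permutation matrix and hence fixes the all-ones vector: $I\vec{1}_n = A\vec{1}_n = B\vec{1}_n = \vec{1}_n$. This invites the ansatz that the solution is constant on each of the five transient blocks, namely $\vec{t} = (\tau_1\vec{1}_n, \tau_2\vec{1}_n, \ldots, \tau_5\vec{1}_n)^{\mathsf{T}}$ for scalars $\tau_1, \ldots, \tau_5$. Substituting this into $(I-Q)\vec{t} = \vec{1}$ and using that every block sends $\vec{1}_n$ to a scalar multiple of $\vec{1}_n$, the $5n$ scalar equations collapse block-by-block into the single $5 \times 5$ system $(I_5 - C)\vec{\tau} = \vec{1}_5$, where $C = (c_{ij})$ records the scalar coefficient attached to block $(i,j)$ of $Q$ and $\vec{\tau} = (\tau_1, \ldots, \tau_5)^{\mathsf{T}}$.

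The crucial point is that $C$ carries no dependence on $n$. Because $Q$ arises from an absorbing chain, $I-Q$ is invertible and $\vec{t}$ is unique; since the block-constant vector built from the (unique) solution of the reduced system does satisfy the full system, it must equal $N\vec{1}$. Therefore the expected number of steps equals $\tau_1$, is identical for every $n \ge 2$, and in fact coincides with the two-player value, as $C$ is exactly the transient part of the two-player chain of Equation~\ref{eqn:twoplayer}. I would remark that the invertibility of $I_5 - C$ needs a one-line justification: its rows are strictly substochastic (each row of the full two-player matrix sums to $1$ with positive leakage into the absorbing states), so the spectral radius of $C$ is less than one.

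It then remains only to solve the reduced system for $\tau_1$. Writing $C$ out from the blocks of Equation~\ref{transitionmatrix}, Cramer's rule gives $\tau_1 = \det(M_1)/\det(I_5 - C)$, where $M_1$ is $I_5 - C$ with its first column replaced by $\vec{1}_5$; expanding these two determinants as polynomials in $p$ and $q$ and cancelling common factors produces the stated expression. The only genuine obstacle is the conceptual reduction to the $n$-independent $5 \times 5$ system; once that is secured, the determinant evaluation is routine symbolic algebra, with the physically expected poles at $p=1$ (nobody ever misses a long shot) and $q=0$ (a missed long shot can never be converted) serving as a sanity check on the final formula.
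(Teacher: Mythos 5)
Your proof is correct, and it arrives at the same endpoint as the paper --- an $n$-independent reduction to the five transient states of the two-player chain, followed by a symbolic solve --- but it justifies the key reduction in a genuinely different and more rigorous way. The paper's own proof is essentially verbal: it asserts independence of $n$ on the grounds that, with equally skilled players, it makes no difference whether the player who just made a basket or some other player takes the next long shot, and then applies Theorem~\ref{thm:expected} to the matrix of Equation~\ref{eqn:twoplayer} (checking agreement with the three-player example). Your lumping argument --- every block of $Q$ in Equation~\ref{transitionmatrix} is a scalar multiple of a permutation matrix, hence fixes $\vec{1}$, so the block-constant ansatz collapses $(I-Q)\vec{t}=\vec{1}$ to an $n$-independent $5\times 5$ system which, by uniqueness of $\vec{t}$, must give the actual solution --- is a rigorous proof of exactly the statement the paper only gestures at; that is what your approach buys. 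Three small caveats. First, your identification of the reduced matrix $C$ with the two-player transient matrix holds only after the relabeling of states recorded in Remark~\ref{rmk:gamestates} (Blocks 3, 4, 5 correspond to (G5), (G3), (G4) respectively). Second, as printed, Equation~\ref{eqn:twoplayer} contains a typo: its $(3,2)$ entry reads $(1-q)p$, whereas the game dynamics (and row-stochasticity) force $(1-q)(1-p)$; the matrix $C$ you read off Equation~\ref{transitionmatrix} has the correct entry, and it is the one that reproduces the stated formula, so your route is in fact safer than a literal application of Theorem~\ref{thm:expected} to Equation~\ref{eqn:twoplayer} as printed. Third, strict substochasticity of every row of $C$ holds only for $0<p<1$ and $0<q<1$; at boundary parameter values you should instead invoke the standard fact that $I-Q$ is invertible for any absorbing chain, which is all your uniqueness step requires.
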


\begin{proof}
	Notice this expression is independent of $n$, the number of players. Because all players are equally skilled, there is no difference between a player making a basket, then that same player taking a long shot verses making a basket and passing to another player to take a long shot. Then the theorem follows from Theorem \ref{thm:expected} applied to the matrix from Equation \ref{eqn:twoplayer}. One can check that this result agrees with Theorem \ref{thm:expected} applied to the matrix from Example \ref{example:threeplayer}.
\end{proof}

\begin{cor}
\label{cor:rounds}
	The expected number of steps before someone wins a game of Knockout with $n$ players is
$$\frac{(n-1)\left(p+q-3\right)\left(p^{2}-p\,q-2\,p+2\,q+1\right)\left(-1\right)}{\left(q\right)\left(p-3\right)\left(p-1\right)\left(p-q+1\right)}.$$
\end{cor}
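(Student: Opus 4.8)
The plan is to exploit the structural decomposition of the game that was set up in the model description of Section~\ref{sec:prelim}: a game of Knockout with $n$ players consists of exactly $n-1$ consecutive rounds, each of which terminates with a single player being eliminated. Writing $L$ for the total number of steps in the game and $L_k$ for the number of steps taken during round $k$, we have $L = \sum_{k=1}^{n-1} L_k$, so by linearity of expectation $\mathbb{E}[L] = \sum_{k=1}^{n-1} \mathbb{E}[L_k]$. The goal is then to show that each summand equals the single-round expectation computed in Theorem~\ref{thm:expectedlength}.

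First I would verify that every round begins in the same game-state, namely the one in which the relevant adjacent pair of players each shoot a long shot first. This is exactly the first game-state in the transition matrices of Equation~\ref{eqn:twoplayer} and Equation~\ref{transitionmatrix}, and the recursion discussion preceding Example~\ref{example:threeplayer} confirms that when $P_i$ is eliminated the next round opens with $P_{i+1}$ and $P_{i+2}$ both taking long shots. Hence the expected length of each round is precisely the quantity furnished by Theorem~\ref{thm:expectedlength}, computed starting from game-state~$1$.

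The crucial observation is that Theorem~\ref{thm:expectedlength} delivers a value for $\mathbb{E}[L_k]$ that is independent of the number of players remaining in the round. Consequently every one of the $n-1$ rounds has the same expected length,
$$\frac{\left(p+q-3\right)\left(p^{2}-p\,q-2\,p+2\,q+1\right)\left(-1\right)}{\left(q\right)\left(p-3\right)\left(p-1\right)\left(p-q+1\right)},$$
and summing this common value over the $n-1$ rounds yields the claimed formula.

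The hard part, or at least the only point requiring care, is that the rounds are not independent: the starting configuration of round $k+1$ (which players occupy which positions) depends on the outcome of round $k$. However, linearity of expectation requires no independence whatsoever, and since each round's expected length is the same constant regardless of the configuration in which the round starts, the conditional expectation of $L_k$ given the entire history of the earlier rounds equals that constant; the tower property then collapses the sum to $(n-1)$ times the single-round expectation. Thus the substantive check is simply that every round does begin in game-state~$1$, so that Theorem~\ref{thm:expectedlength} applies verbatim, and this is guaranteed by the model.
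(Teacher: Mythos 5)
Your proposal is correct and follows essentially the same route as the paper: decompose the game into $n-1$ rounds, apply Theorem~\ref{thm:expectedlength} to each round (noting its answer does not depend on the number of players remaining), and sum over the rounds. In fact your treatment is slightly more careful than the paper's own proof, which asserts that ``each elimination is independent''; as you observe, the rounds are not independent, but no independence is needed---linearity of expectation together with the fact that every round opens in the all-long-shot game-state already yields the result.
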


\begin{proof}
	By Theorem \ref{thm:expectedlength}, the expected number of steps until a player is eliminated is independent of the number of players $n$. Since a game with $n$ players requires $n-1$ rounds/eliminations to declare a winner and each elimination is independent, this gives the desired result.
\end{proof}

Taking a closer look at the denominator, and keeping mind both $p$ and $q$ must lie in the interval $[0,1]$, the factor $(p-3)$ is never zero. Additionally, if $q=0$, then no players ever make a short shot. This leads to a never ending game because two players get stuck in an infinite loop of missing short shots. Similarly, if $p=1$, the game never ends because all players constantly make their long shot. Finally, if $q=0$ and $p=1$, then the factor $(p-q+1)$ is zero. This corresponds to the never ending game where no players make their long shot, but all players make their short shot.

\begin{remark}
	The largest game of Knockout, as recorded by Guiness World Records was hosted by the Dallas Mavericks in October, 2015 and involved 701 participants \cite{Guiness}. This game lasted over three hours with the winner winning season tickets to the Mavericks. According to Corollary \ref{cor:rounds}, if all players shot $p=40\%$ and $q=90\%$, this game would require an average 3051 steps or about 6,000 shots before its conclusion.
\end{remark}

\section{Numerical Data and Discussion}\label{sec:numeric}
\label{sec:discussion}
In this section, we investigate the data produced by the model outlined in the sections above and identify trends. This is done with the hope of producing actionable steps that players may take to increase their probability of winning a game of Knockout. We begin by summarizing the trends:
\begin{itemize}
	\item the worst position to start in is first
	\item it is better to be further back in line
	\item Even numbered positions receive a bump in probability of winning, but this bump rapidly decreases the further back in line one starts
	\item The largest difference between a player's best chance of winning and the worst chance of winning occurs when $p \approx 0.5$
\end{itemize}

Figure~\ref{graph:40907} is a graph highlighting all of these trends at once. It is from a game played with 7 players who shoot a short shot percentage of $q = 90\%$ and a long shot percentage of $p = 40\%$. The approximate probabilities of winning are displayed in Table~\ref{table:40907}.

\begin{figure}
\begin{center}
\includegraphics[width=\textwidth]{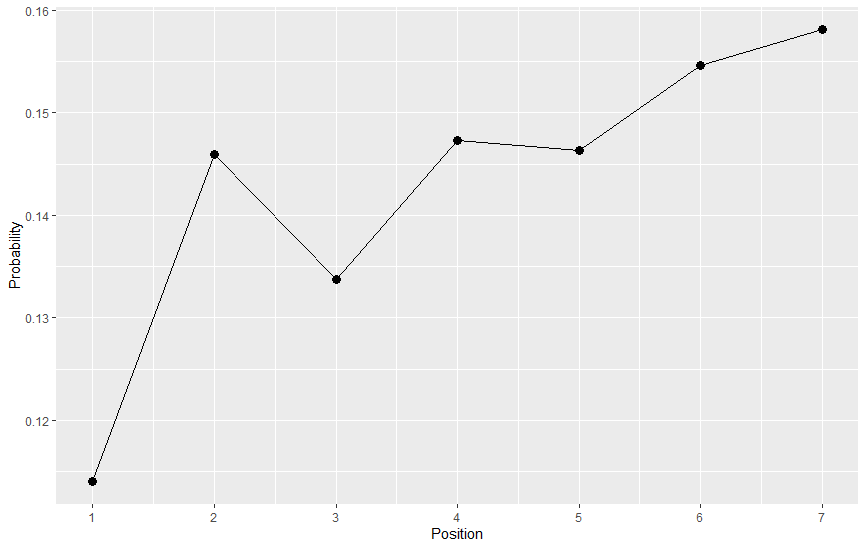}
\end{center}
\caption{Probability of winning by position when $n=7$, $p=0.4$ and $q = 0.9$}
\label{graph:40907}
\end{figure}

\begin{table}
\begin{center}
\begin{tabular}{|c|c|c|c|c|c|c|c|}
\hline
Position: & 1 & 2 & 3 & 4 & 5 & 6 & 7 \\
\hline
Probability of Winning: & 0.11402 & 0.14597 & 0.13370 & 0.14727 & 0.14634 & 0.15462 & 0.15807 \\
\hline
\end{tabular}
\end{center}
\caption{Probability of winning by position when $n=7$, $p=0.4$ and $q = 0.9$}
\label{table:40907}
\end{table}

Notice that $P_1$ has the lowest probability of winning. Then $P_2$ has a much better probability of winning, but $P_3$ has a slightly lower probability of winning compared with the second player. The fourth player has a slightly better probability of winning than $P_2$ and $P_5$ has a slightly worse probability of winning than $P_4$, but still slightly better than $P_2$. Notice that the players starting in even-numbered positions receive a ``bump'' in their probability of winning with this bump becoming less pronounced as the game goes on. Ignoring this even-numbered bump, the further back in line a player starts, the better that player's probability of winning. An explanation of this phenomenon is offered in Section~\ref{subsec:second}.

\subsection{First is the Worst}
\label{subsec:first}
It turns out that the playground wisdom of ``first is the worst'' is at least half right. By Corollary~\ref{cor:2player}, in the two-player game, the probability that $P_1$ wins always lies in the interval $[\frac{1}{3}, \frac{1}{2})$. In particular, notice that $P_1$ never has an equal probability of winning as $P_2$. For the $n$-player game where $n < 60$, we searched for any case where the first player does not have the minimum probability of winning where $q$ ranges from 0.01 to 1.00 and $p$ ranges from 0.00 to 0.99, in increments of 0.01, skipping the case where $q = 1.00$ and $p = 0.00$ because this game ends in an infinite loop of players missing their first (long) shot and immediately making their second (short) shot where no players are eliminated. In the graphs that follow, notice that $P_1$ often has a much lower probability of winning than any other player. Thus, not only is first the worst, it is the worst by a rather large margin.

It is not hard to rationalize why $P_1$ has a disadvantage. Most players enjoy a sort of immunity when they shoot their first shot. That is, usually players take their long shot while the player in front of them is either rebounding the ball or about to shoot a short shot. As such, the player shooting the long shot cannot be eliminated because the player behind them cannot make a basket before them if they do not yet have the ball. On the other hand, it is entirely possible for $P_1$ to be eliminated immediately after missing their first shot, leaving them to both feel the sting of being the first player eliminated and also having to wait until almost an entire game of Knockout is played before they get to play again.

This property of Knockout can be thought of as a feature, rather than a bug. The second author often remembers  in his childhood when playing Knockout, new games would start with the order being determined by the reverse order of elimination. That is, the winner would be ``rewarded'' for their win by being forced to start in the first position, with second place as the second position. This allowed games to often have different winners since the last player that won (who may be best shooter) starts in the worst position possible. The downside of this strategy is that the first person to be eliminated (who is often a weaker player) starts as the last person in line. If all players were equally skilled, this would be an advantage, but on the playground where players have varying levels of abilities, this often leads the weakest player having the best or second to best player in a position to eliminate them.

\subsection{Second is... better}
\label{subsec:second}
Players that start in an even numbered position tend to receive a bump in their probability of winning, but that bump is most pronounced for the second player, less pronounced for the fourth player and so on, until it appears to go away completely. This can be seen in Figure~\ref{fig10} with 10 players or in Figure~\ref{fig100} with 100 players. Decreasing the long shot probability appears to increase the bump. It is particularly noteworthy how large the second player bump is in Figure~\ref{fig100} with 100 players. Notice that while the fourth, sixth, and eighth players also enjoy an increased probability of winning the game, the most pronounced increase is enjoyed by the second player, with this bump quickly tapering off until it can almost not be seen after the 10th player in line. 

\begin{figure}
\begin{center}
\includegraphics[width=\textwidth]{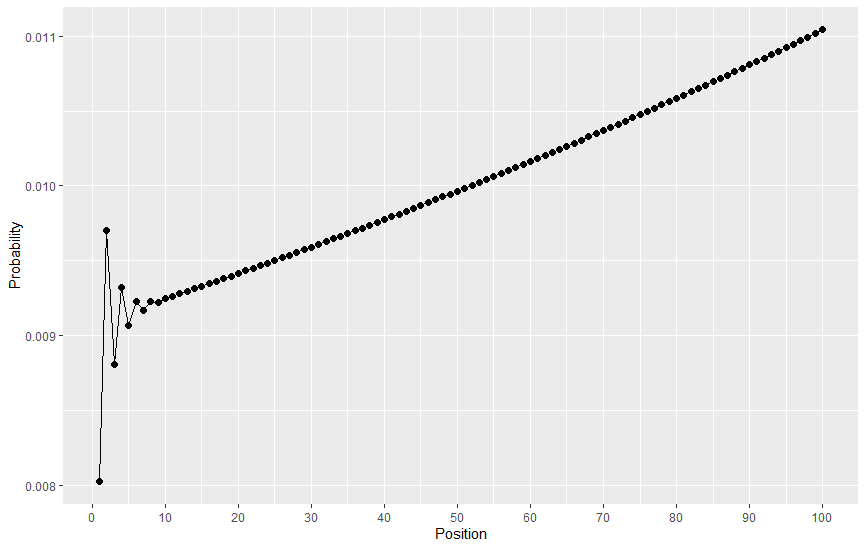}
\end{center}
\caption{Probability of winning by position when $n=100$, $p=0.4$ and $q = 0.9$}
\label{fig100}
\end{figure}

If it is too difficult to see the probabilities with 100 players, it may be instructive to examine the similar behaviors in the 10-player game in Figure~\ref{fig10}. Notice once again while all even numbered players receive an increased probability of winning compared to the odd numbered players, this advantage decreases as the players are later in line. We can reason why this is true. The ``immunity'' that the second player receives enables that player to likely not be eliminated after their first chances to make a basket. This allows them to cycle to the end of the line and enjoy a similar advantage that the last player in line enjoys. However, there is a (relatively smaller) chance that the second player is eliminated right away so often their chances of winning is not as great as the last player in line.

\begin{figure}
\begin{center}
\includegraphics[width=\textwidth]{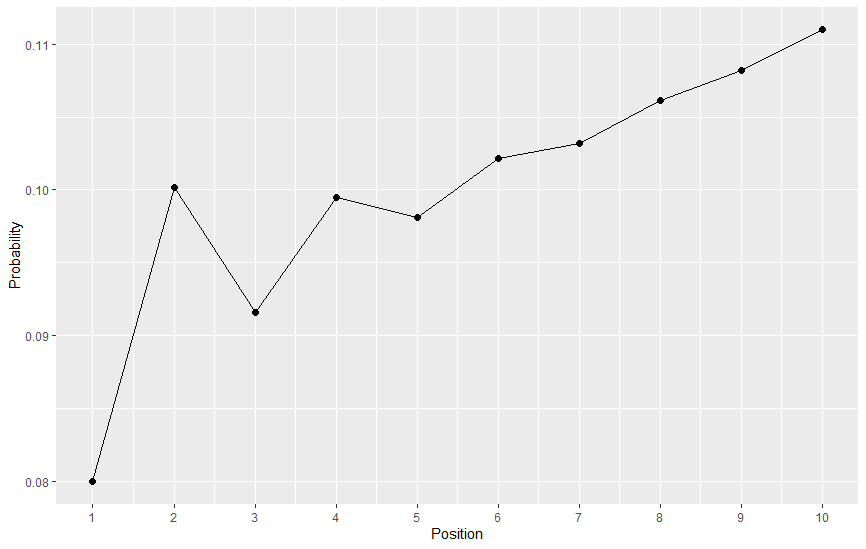}
\end{center}
\caption{Probability of winning by position when $n=10$, $p=0.4$ and $q = 0.9$}
\label{fig10}
\end{figure}

As mentioned above, decreasing the probability of making a long shot appears to increase the even-numbered position bump. Figure~\ref{fig:multiple} is a graph that shows this behavior with 10 players who shoot a short shot probability of $q=90\%$ and the long shot varies. The circles represent a long shot probability of $p=20\%$, the squares represent a long shot probability of $p=50\%$, and the triangles represent a long shot probability of $p=80\%$. Notice that the oscillation between even and odd positioned players is most pronounced for the game where $p=20\%$ and less pronounced for $p=80\%$, with the game where $p=50\%$ falls somewhere in the middle. The precise probabilities are represented in Table \ref{table:multiple}.

Instead of viewing the even-numbered players as getting a bump in winning probability, another way of viewing this phenomenon is that the odd-numbered players are at a disadvantage. It is not difficult to see that this is certainly the case for $P_1$\ Indeed, the first player may be immediately eliminated if they miss their first shot (if the $P_2$ also makes their first shot). Looking back at Figure \ref{fig100}, notice that $P_1$ has a considerably lower probability of winning, followed by the third, fifth, and seventh players in line. Again as we move further back in line, this disadvantage dissipates. Looking ahead at Figure \ref{fig:multiple}, it may be at first confusing to see that the probability of the first player winning is highest for $p=0.8$ and lowest for $p=0.5$ with the probability $P_1$ wins for $p=0.2$ falling somewhere in the middle. Investigating this further, it appears that this is closely related to the probability that$P_1$ is eliminated in their very first go-around shooting. See Section \ref{subsec:unfairgames} for more details.

\begin{figure}
\begin{center}
\includegraphics[width=\textwidth]{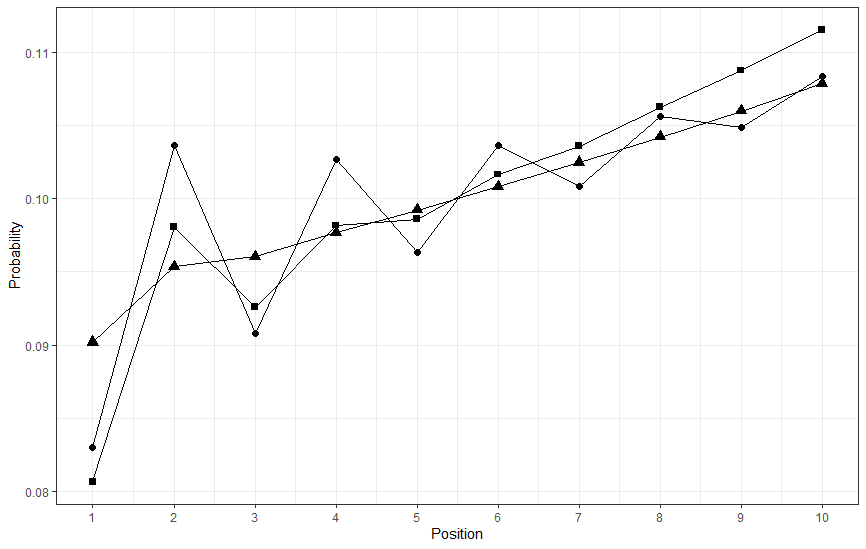}
\end{center}
\caption{Probability of winning by position when $n=10$, $q = 0.9$ and $p$ varies between $0.2, 0.5$, and $0.8$.}
\label{fig:multiple}
\end{figure}

\begin{table}
\begin{center}
\begin{tabular}{|c|c|c|c|c|c|c|c|c|c|c|c|}
\hline
Position: & 1 & 2 & 3 & 4 & 5 & 6 & 7 & 8 & 9 & 10 \\
\hline
$p = 0.2$: & 0.0830 &0.1037 & 0.0908 & 0.1027 &0.0964 & 0.1037 & 0.1009 &0.1057 &0.1049& 0.1084 \\
\hline
$p = 0.5$: & 0.0806 & 0.0981 &0.0926 &0.0982 &0.0986 &0.1017& 0.1036 & 0.1063 & 0.1088 &0.1116 \\
\hline
$p = 0.8$: & 0.0902 & 0.0954 & 0.0961 & 0.0977& 0.0992 &0.1008 &0.1025 &0.1042& 0.1060 &0.1079 \\
\hline
\end{tabular}
\end{center}
\caption{Probability of winning by position when $n=10$, $q=0.9$ and $p$ ranges}
\label{table:multiple}
\end{table}

\subsection{Last but not least}
\label{subsec:last}
In addition to not wanting to be first in line, it is better to be further back in line  as a general rule of thumb. In fact, a computer search indicated that as $n$ ranges from 2 to 60, and $q$ ranges from $0.01$ to $1.00$ while $p$ ranges from 0.00 to 0.99 in increments of 0.01, about 97\% of the time it is best to be the very last player in line. All of these cases where being last is not the optimal starting position occur when $n$ is odd and $p$ is relatively low. In each of these cases, the optimal starting position is second to last. 

\subsection{When does this really matter?}
\label{subsec:unfairgames}
The goal of this article is to discover the optimal starting position in a game of Knockout. It is also reasonable to consider when this advantage matters. That is, we can ask which shooting percentages result in the least fair game - the games where it most matters what the starting positions of the players are. One way to measure the fairness of a game of Knockout is to examine the spread of the probabilities of the most advantaged player with the least advantaged player by position. In using this metric as the definition of a ``least fair'' game of Knockout, the data indicates that the least fair games occur when $p \approx 0.5$. The authors believe the reason behind this value of $p$ maximizing the difference in probabilities of winning is due to this value maximizing the probability that $P_1$ is eliminated in their first turn with the ball. Indeed the probability that $P_1$ is eliminated before passing the ball to $P_3$ is
$$(1-p)p + (1-p)^2(1-q)q\left(\dfrac{1}{1-(1-q)^2}\right) \approx (1-p)p$$
because the probability of making a short shot should be assumed to be rather close to one so $1-q$ is small. The quantity $(1-p)p$ is maximized when $p=0.5$. So the first player's probability of being eliminated right away is highest when $p=0.5$. As mentioned in Section~\ref{subsec:first}, $P_1$ the lowest probability of winning. Increasing the probability that this player is eliminated right away decreases the probability this player wins which, in turn, increases the maximum spread of of all players' win probability. Therefore, the starting position in a game of Knockout matters most when all players shoot about 50\% from the long shot.

\subsection{Tips to Win at Knockout}
As a summary, if one finds themself involved in a game of Knockout - even if this game is not made up of clones of oneself - the authors recommend the following tactics for choosing a starting position to optimize the player's chance of winning.
\begin{enumerate}
	\item Do not be the first player in line. According to our simulations, the first player in line always has the worst probability of winning. This is the most important point. It is also more fun to stay in the game longer and it is not unusual for the first player to line to also be the first player that is eliminated.
	\item Be as far back in line as possible. As discussed in Section \ref{subsec:last} and seen in the graphs of the probabilities of winning, it is usually better to be as far back in line as possible.
	\item Choose an even-numbered position in line. This tip matters less if the player is successful in the tip above, being as far back in line as possible. Although many players (and playground wisdom) believe ``second is the best,'' the data shows that this is almost never the case. If given the choice between being second and fourth, it is sometimes better to be in the second position, especially if most players shoot a low percentage from long range, but for the sake of a general rule of thumb it is usually better to be later in line in a second position. Given the choice of between 4th and 5th, though, things get a little trickier. Now the question becomes which is more advantageous - the even position or the further back in line? This is where the more detailed discussion in Section \ref{subsec:second} becomes relevant. Since the even numbered bump gets less pronounced the further back in line players move, the even-odd position in line matters less and less. For a simple rule of thumb, the authors recommend choosing the even position if the positions are in the first quarter of the line and the choice of the two positions are concurrent. If the two positions differ by more than one or the positions are not in the first quarter of the line, it is usually a good bet to just be as far back in line as possible, because the even numbered position bump will have almost phased out by this point.
	\item Avoid being directly in front of or behind a good shooter. While the model does not account for differences in individual players' shooting abilities, following the logic of the trends outlined in Section \ref{sec:numeric} provides this tip. Trying to avoid being directly in front of a good shooter is rather obvious. Since the player behind you is the one that can eliminate you, you do not want them to have a high probability of making shots. Perhaps less obvious is that it is not a good idea to also be directly behind a good shooter. This may seem counter-intuitive because being behind the good shooter means that the last person the good shooter can eliminate is you. However, each time the player in front of you eliminates a player, you are in the set-up of a new round of Knockout (with one less player) where you are in the first position. As discussed in Section \ref{subsec:first}, first is the worst. Since good shooters tend to eliminate more players, this means that being directly behind a good shooter puts you in the disadvantageous first position more often.
	\item Finally, if all else fails, one should be a better shooter than the other players.
\end{enumerate}

\section{Unequally Skilled Players and Questions}
\label{sec:unequal}
The majority of this paper limits the scope to studying the game of Knockout where all players are equally skilled. The reason for this assumption is to reduce the number of parameters. To account for unequally skilled players requires $2n+1$ parameters: $n$ and each player's individual long shot and short shot percentage. With the equally-skilled assumption, there are only three variables to consider: $n$, $p$, and $q$.

This does not match the real world where people player this game with vastly different skill levels. The same analysis as above can be done where each player $P_k$ has a different long shot percentage $p_k$ and short shot percentage $q_k$. The case of two players with different skill levels is very similar to Equation~\ref{eqn:twoplayer} and a computer can calculate the probability of each player winning. Unfortunately the denominator is a polynomial of degree 6 containing 25 terms which makes it unable to fit on this page. There are still interesting questions that one can ask and investigate when the players are unequally skilled.


\begin{question}
	If $n-1$ equally skilled players play against an expert shooter such as an NBA player, what is the best starting position relative to the NBA player? Does the relative position change depending on the NBA player's starting position?
\end{question}

For example, suppose a group of $n-1$ mathematicians played against Caitlin Clark or Stephen Curry. Since Clark and Curry have a massive shooting advantage over the mathematicians, where should the mathematicians force these superstars to stand in line at the beginning of the game in order to maximize the chances that a mathematician wins? The authors believe the answer is almost certainly forcing the superstar to go first. It would also be interesting to ask how many mathematicians must be added so that the superstar's probability of winning is less than 50\%.

Another question to explore inverts the question involved in this paper
\begin{question}
	If $P_1$ is first in line and playing Knockout with $n-1$ other players who are equally skilled, how much better does $P_1$ have to be in order to have the best probability of winning?
\end{question}

Answering the questions above and more questions regarding unequally skilled players could make for an interesting project for a motivated undergraduate student.

\raggedbottom
\def\cprime{$'$} \def\cprime{$'$}
\providecommand{\MR}{\relax\ifhmode\unskip\space\fi MR }
\providecommand{\MRhref}[2]{%
  \href{http://www.ams.org/mathscinet-getitem?mr=#1}{#2}
}
\providecommand{\href}[2]{#2}
\begin{bibdiv}
\begin{biblist}
\bib{bukiet}{article}{
    AUTHOR = {Bukiet, Bruce},
    AUTHOR = {Harold, Elliotte Rusty},
    AUTHOR = {Palacios, Jos\'{e} Luis.},
     TITLE = {A markov chain approach to baseball},
   JOURNAL = {Oper. Res.},
  FJOURNAL = {Operations Research},
    VOLUME = {45},
      YEAR = {1997},
    NUMBER = {1},
     PAGES = {14--23},
}

\bib{MR2360905}{article}{
   author={Callaghan, Thomas},
   author={Mucha, Peter J.},
   author={Porter, Mason A.},
   title={Random walker ranking for NCAA division I-A football},
   journal={Amer. Math. Monthly},
   volume={114},
   date={2007},
   number={9},
   pages={761--777},
   issn={0002-9890},
   review={\MR{2360905}},
   doi={10.1080/00029890.2007.11920469},
}

\bib{grinstead}{book}{
    AUTHOR = {Grinstead, C.M.},
    AUTHOR = {Snell, J.L.},
     TITLE = {Introduction to Probability},
 PUBLISHER = {American Mathematical Society},
      YEAR = {1998},
}

\bib{Guiness}{webpage}{
  author={Guiness World Records},
  title={Largest game of knockout basketball},
  date={2015},
  myurl={https://www.guinnessworldrecords.com/world-records/largest-game-of-knockout-(basketball)}
}

\bib{MR2011961}{article}{
   author={Hirotsu, Nobuyoshi},
   author={Wright, Mike},
   title={A Markov chain approach to optimal pinch hitting strategies in a
   designated hitter rule baseball game},
   journal={J. Oper. Res. Soc. Japan},
   volume={46},
   date={2003},
   number={3},
   pages={353--371},
   issn={0453-4514},
   review={\MR{2011961}},
   doi={10.15807/jorsj.46.353},
}

\bib{MR3409642}{article}{
   author={Kaplan, Edward H.},
   author={Mongeon, Kevin},
   author={Ryan, John T.},
   title={A Markov model for hockey: manpower differential and win
   probability added},
   journal={INFOR Inf. Syst. Oper. Res.},
   volume={52},
   date={2014},
   number={2},
   pages={39--50},
   issn={0315-5986},
   review={\MR{3409642}},
   doi={10.3138/infor.52.2.39},
}

\bib{Kolbush}{article}{
   author={Kolsbush, Jason},
   author={Sokol, Joel},
   title={A logistic regression/markov chain model for american college football},
   journal={Int. j. comput. sci. sport},
   volume={16},
   date={2017},
   number={3},
   pages={185--196},
}

\bib{Kvam}{article}{
   author={Kvam, Paul},
   author={Sokol, Joel S.},
   title={A logistic regression/markov chain model for NCAA basketball},
   journal={Naval Res. Logist.},
   volume={53},
   date={2006},
   number={8},
   pages={788--803},
   issn={0894-069X},
   review={\MR{2267975}},
   doi={10.1002/nav.20170},
}

\bib{Rosenthal}{webpage}{
  author={Rosenthal, Seth},
  title={Knockout, Gotcha, Lightning: What do you call this basketball game?},
  date={2016},
  myurl={https://www.sbnation.com/lookit/2016/3/25/11304068/knockout-gotcha-lightning-basketball-game-map}
}

\bib{MR4336365}{article}{
   author={Shi, Jian},
   author={Song, Kai},
   title={A discrete-time and finite-state Markov chain based in-play
   prediction model for NBA basketball matches},
   journal={Comm. Statist. Simulation Comput.},
   volume={50},
   date={2021},
   number={11},
   pages={3768--3776},
   issn={0361-0918},
   review={\MR{4336365}},
   doi={10.1080/03610918.2019.1633351},
}

\bib{Wang}{article}{
   author={Wang, Xiaofeng},
   author={Zhang, Xioa-Ping},
   title={Ice hockey shooting event modeling with mixture hidden Markov model},
   journal={Multimed Tools Appl},
   volume={57},
   date={2012},
   pages={131--144},
   doi={10.1007/s11042-010-0722-9},
}

\end{biblist}
\end{bibdiv}

\end{document}